\numberwithin{equation}{section}
\newtheorem{theorem}{Theorem}[section]
\newtheorem{remark}[theorem]{Remark}
\theoremstyle{definition}
\renewcommand{\epsilon}{\eps}
\renewcommand{\i}{{\rm i}}
\newcommand{\C}{{\mathbb C}}
\newcommand{\R}{{\mathbb R}}
\newcommand{\eps}{\varepsilon}
\newcommand{\pnorm}[2][]{\if #1'' \left|#2\right|_p \else \left|#2\right|_{#1} \fi}
\renewcommand{\theta}{\vartheta}
\title{Logarithmic Sobolev inequality revisited}
\author[H.-M. Nguyen]{Hoai-Minh Nguyen}
\author[M.\ Squassina]{Marco Squassina}
\address[H.-M. Nguyen]{Department of Mathematics \newline\indent
	EPFL SB CAMA \newline\indent
	Station 8 CH-1015 Lausanne, Switzerland}
\email{hoai-minh.nguyen@epfl.ch}
\address[M.\ Squassina]{Dipartimento di Matematica e Fisica \newline\indent
	Universit\`a Cattolica del Sacro Cuore \newline\indent
	Via dei Musei 41, I-25121 Brescia, Italy}
\email{marco.squassina@unicatt.it}
\thanks{The second author is member of {\em Gruppo Nazionale per l'Analisi Ma\-te\-ma\-ti\-ca, la Probabilit\`a e le loro Applicazioni} (GNAMPA) of the {\em Istituto Nazionale di Alta Matematica} (INdAM)}
\subjclass[2010]{46E35, 28D20, 82B10, 49A50}
\keywords{Nonlocal functionals, logarithmic Sobolev inequality, entropy.}
\begin{document}

\begin{abstract}
	We provide a new characterization of the logarithmic Sobolev inequality. 
\end{abstract}
\maketitle

\section{Introduction}

The classical Sobolev inequality translates information about the derivatives of a function
into information about the size of the function itself. Precisely, for a function $u$ with square
summable gradient in dimension $N$ one obtains that $u$ is $2N/(N-2)$-summable, that is a gain
in summability which depends on $N$ and which tends to deteriorate as $N\to\infty.$
On the other hand, since the middle fifties, people have started looking at possible replacements
of the Sobolev inequality in order to provide an improvement in the summability {\em independent} of  
the dimension $N$, which can be done in terms of integrability properties of $u^2 \log u^2$. 
This was firstly done by Stam \cite{stam} who proved the logarithmic Sobolev inequality with Gauss measure $d{\mathscr G}$
$$
\int_{\R^N} u^2\log \frac{u^2}{\|u\|_{2,d{\mathscr G}}^2} \,d{\mathscr G}\leq \frac{1}{\pi}\int_{\R^N}|\nabla u|^2 \,d{\mathscr G},
\qquad\,\,
d{\mathscr G}=e^{-\pi |x|^2} dx.
$$
The formula was originally discovered in quantum field theory in order to handle estimates 
which are uniform in the space dimension, for systems with a large number of variables.
A different proof and further insight was obtained by Gross in \cite{gross}.\ See also the work of Adams and Clarke \cite{adamsC} for an elementary proof of the previous inequality. 
These properties are widely used in statistical mechanics, quantum field theory and differential geometry.
A variant of the logarithmic Sobolev inequality with Gauss measure is given by the following 
one parameter family of euclidean inequalities \cite[Theorem 8.14]{LL}
$$
\int_{\R^N} u^2\log \frac{u^2}{\|u\|_{2}^2} \,dx+N(1+\log a)\|u\|_{2}^2\leq \frac{a^2}{\pi}\int_{\R^N}|\nabla u|^2 \,dx.
$$
for any $u\in H^1(\R^N)$ and $a>0$. A version of this inequality for fractional
Sobolev spaces $H^s(\R^N)$ can be found in \cite{costiolis}.
Recently some new characterization of the Sobolev spaces were provided in \cite{BourNg, nguyen06, nguyen07} (see also \cite{bourg,bourg2,bre,bre-linc,BHN,BHN2,BHN3, NgSob2})
in terms of the following family of nonlocal functionals
$$
I_\delta(u):=\int\int_{\{|u(y)-u(x)|>\delta\}}\frac{\delta^2}{|x-y|^{N+2}}dxdy,\quad \delta>0,
$$
where $u$ is a measurable function on $\R^N$. In particular, if $N\geq 3$ and  $I_\delta(u) < \infty$ for some $\delta>0$, then
in \cite{nguyen07} it was proved that 
\begin{equation}\label{inequality}
\int_{\{|u|>\lambda_N\delta\}} |u|^{2N/(N-2)}dx\leq C_N I_\delta(u)^{N/(N-2)}, 
\end{equation}
for some positive constants $C_N$ and $\lambda_N$. This is a sort of nonlocal improvement 
of the classical Sobolev inequality and it is also possible to show that in the singular limit $\delta\searrow 0$
one recovers the classical Sobolev result, since $I_\delta$ converges to the Dirichlet energy up to a normalization constant.
The aim of this note is to remark that in this context also a logarithmic type estimate holds. Thus we have the summability
gain independent of $N$ can be controlled in terms of $I_\delta(u).$

More precisely, we have the following 
\noindent
\begin{theorem}
	\label{main1}
	Let $u\in L^2(\R^N)$ $(N \ge 3)$.   There is a positive constant $C_N$ such that 
%{\color{blue}{
$$
\int_{\R^N} \frac{u^2}{\|u\|_{2}^2}\log \frac{u^2}{\|u\|_{2}^2} \,dx
+ \frac{N}{2}\log \|u\|_2^2\leq \frac{N}{2} \log \left( C_N \delta^\frac{4}{N} \|u\|_2^{\frac{2N - 4}{N}} + C_N I_\delta(u) \right), 
$$		
%}}
%	$$
%	\int_{\R^N} \frac{u^2}{\|u\|_{2}}\log \frac{u^2}{\|u\|_{2}^2} \,dx
%	+C_N\log \|u\|_2\leq C_N \log \big( \delta + I_\delta(u) \big),
%	$$
for all $\delta>0$. In particular, if $u\in L^2(\R^N)$ is such that $I_\delta(u) < \infty$ for some $\delta>0$, then
\begin{equation}
\label{log-sum}
\int_{\R^N} u^2\log u^2 dx<+\infty.
\end{equation}
\end{theorem}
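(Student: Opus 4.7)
The approach is to obtain the entropy bound from the nonlocal $L^{2^*}$ estimate \eqref{inequality} by a Jensen-type interpolation at the critical Sobolev exponent $p = 2N/(N-2)$, which exceeds $2$ because $N \ge 3$. The bridge is the classical ``$L^p$-to-entropy'' inequality, proved by applying Jensen's inequality to the probability measure $d\mu = (u^2/\|u\|_2^2)\,dx$ with the concave function $\log$ evaluated at $(u^2/\|u\|_2^2)^{(p-2)/2}$:
$$\int_{\R^N} \frac{u^2}{\|u\|_2^2}\log \frac{u^2}{\|u\|_2^2}\,dx \le \frac{2}{p-2}\log \frac{\|u\|_p^p}{\|u\|_2^p}, \qquad p > 2.$$

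Next, I estimate $\|u\|_p^p$ for $p = 2N/(N-2)$ by splitting at the threshold $\lambda_N\delta$: on $\{|u|\le\lambda_N\delta\}$ the pointwise bound $|u|^p \le (\lambda_N\delta)^{p-2}|u|^2$ with $p-2=4/(N-2)$, and on $\{|u|>\lambda_N\delta\}$ the estimate \eqref{inequality}, together give
$$\|u\|_p^p \le C_N\bigl(\delta^{4/(N-2)}\|u\|_2^2 + I_\delta(u)^{N/(N-2)}\bigr).$$
The final step is purely algebraic: using the subadditivity $(a+b)^{(N-2)/N}\le a^{(N-2)/N}+b^{(N-2)/N}$, valid since the exponent lies in $(0,1)$, raise the last inequality to the power $(N-2)/N$ to obtain
$$\bigl(\|u\|_p^p\bigr)^{(N-2)/N} \le C_N'\bigl(\delta^{4/N}\|u\|_2^{(2N-4)/N} + I_\delta(u)\bigr).$$
Taking the logarithm and multiplying by $N/2$, together with the identity $\frac{2}{p-2}\log(\|u\|_p^p/\|u\|_2^p) = \frac{N}{2}\log (\|u\|_p^p)^{(N-2)/N} - \frac{N}{2}\log\|u\|_2^2$ (which uses $\frac{N-2}{2}\,p=N$), converts the Jensen inequality into exactly the stated bound.

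None of these estimates is difficult in isolation; the only slightly delicate point is the power bookkeeping forced by the subadditivity step, which is what produces the required homogeneity (factor $N/2$ outside the log, exponent $1$ on $I_\delta(u)$, $4/N$ on $\delta$, $(2N-4)/N$ on $\|u\|_2$) rather than the cruder form with factor $(N-2)/2$ and a fractional power of $I_\delta$. Once the main inequality is established, \eqref{log-sum} follows at once: when $I_\delta(u)<\infty$ the right-hand side is finite, so the positive part of $(u^2/\|u\|_2^2)\log(u^2/\|u\|_2^2)$ is integrable, and writing $u^2\log u^2 = u^2\log(u^2/\|u\|_2^2) + u^2\log\|u\|_2^2$ together with $u\in L^2(\R^N)$ yields $\int_{\R^N} u^2\log u^2\,dx<+\infty$.
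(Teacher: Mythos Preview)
Your argument is correct and follows essentially the same route as the paper: apply Jensen's inequality with the probability measure $u^2\,dx/\|u\|_2^2$ and the concave function $\log$ to pass from the critical $L^{2N/(N-2)}$ norm to the entropy, then bound $\|u\|_{2N/(N-2)}^{2N/(N-2)}$ by splitting at level $\lambda_N\delta$ and invoking \eqref{inequality}. The only cosmetic differences are that the paper normalizes to $\|u\|_2=1$ at the outset and leaves the subadditivity step (your passage from exponent $N/(N-2)$ on $I_\delta(u)$ to exponent $1$) implicit in the phrase ``which implies \eqref{part-1}'', whereas you carry the general $\|u\|_2$ throughout and spell that step out.
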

\begin{proof}
	By a simple normalization argument, we may reduce the 
	assertion to proving that 
	\begin{equation}\label{part-1}
	\int_{\R^N} u^2\log u^2 dx\leq \frac{N}{2} \log \left(C_N  \delta^\frac{4}{N} + C_N I_\delta(u) \right),\quad \mbox{ for all } \delta > 0,
	\end{equation}
	for any $u\in L^2(\R^N)$ such that $\|u\|_{2}=1$.  Considering the normalized outer measure
	$$
	\mu(E):=\int_E u^2(x)dx,\qquad \mu(\R^N)=1,
	$$
	and using Jensen's inequality for concave nonlinearities and with measure $\mu,$
	we have 
	\begin{align}
	\label{chiav}
	 \log\left(\int_{\R^N}|u|^{\frac{2N}{N-2}}dx\right)&=	\log\left(\int_{\R^N}|u|^{\frac{4}{N-2}}d\mu\right) 
	\geq \int_{\R^N}\log |u|^{\frac{4}{N-2}} d\mu \\
	& =\frac{2}{N-2} \int_{\R^N}u^2\log u^{2}dx.   \notag
	\end{align}
	On the other hand, applying \eqref{inequality}, we derive that, for all $\delta > 0$, 
	\begin{equation*}
\frac{2}{N-2}	\int_{\R^N} u^2 \log u^2 dx\le \log \Big(D_N \delta^{\frac{4}{N-2}} +  C_N I_\delta (u)^{\frac{N}{N-2}}\Big),
	\end{equation*}
	for some positive constant $D_N$, which implies \eqref{part-1}.  Here we used the fact that 
	$$
	\int_{\{|u| \le \lambda_N \delta\}} |u|^{\frac{2N}{N-2}}dx \le \lambda_N^{\frac{4}{N-2}} \delta^{\frac{4}{N-2}},
	$$
	since $\int_{\R^N} u^2 \, dx = 1$. 
	\end{proof}

\noindent
Defining a notion of {\em entropy} as typical in statistical mechanics:
$$
{\rm Ent}_\mu(f):=\int_{\R^N} \frac{f}{\|f\|_{1,\mu}}\log \frac{f}{\|f\|_{1,\mu}} \,d\mu
+ \frac{N}{2}\log \|f\|_{1,\mu}, \,\,\,\quad f\geq 0,\quad \,\,\|f\|_{1,\mu}:=\int fd\mu,
$$
the conclusion of the previous results reads as
$$
u\in L^2(\R^N),\,\,\exists \delta>0: I_\delta(u)<+\infty\,\,\,\Longrightarrow\,\,\, {\rm Ent}_{{\mathcal L}^N}(u^2)<+\infty.
$$

\begin{remark}[Logarithmic NLS]
	\rm
If $u\in H^1(\R^N)$, then the results of \cite{nguyen06} show that
\begin{equation}
\label{approx}
\lim_{\delta\searrow 0} I_\delta(u)=Q_N\int_{\R^N}|\nabla u|^2dx,
\end{equation}
for some constant $Q_N>0$. Hence, passing to the limit as $\delta\searrow 0$ in the
inequality of Theorem~\ref{main1} one recovers classical forms of the logarithmic
inequality. The logarithmic Schr\"odinger equation
\begin{equation}
\label{sch-evolv}
\i\partial_{t}\phi +\Delta\phi 
+\phi\log |\phi|^{2}=0, 
\quad
\phi:[0,\infty)\times\R^N\to\C,
\quad
N\geq 3,
\end{equation}
admits applications to quantum mechanics, quantum optics, transport and diffusion 
phenomena, theory of superfluidity and Bose-Einstein condensation
(see \cite{Zlosh} and \cite{caz1,caz2,caz3}). The {\em standing waves} solutions of
\eqref{sch-evolv} solve
the following semi-linear elliptic problem
\begin{equation}\label{problema}
-\Delta u +\omega u
= u\log u^{2},  \qquad
u\in H^1(\R^N).
\end{equation}
These equations were recently investigated in \cite{troy,log1}.
From a variational point of view, the search of solutions 
to \eqref{problema} can be associated with the 
study of critical points (in a nonsmooth sense) of the lower 
semi-continuous functional $J:H^1(\R^N)\to \R\cup\{+\infty\}$ defined by
\begin{equation*}
J(u)=\frac{1}{2}\int_{\R^N} |\nabla u|^2\,dx 
+\frac{\omega+1}{2}\int_{\R^N} u^2\,dx-\frac{1}{2}\int_{\R^N} u^2 \log u^2\,dx,
\end{equation*} 
which is well defined by the logarithmic Sobolev inequality.
Due to Theorem~\ref{main1} and \eqref{approx}, one could handle 
a kind of {\em nonlocal approximations} of \eqref{problema}, 
formally defined for $\delta>0$ by 
\begin{equation*}
I'_\delta(u) +\omega u
= u\log u^{2},
\end{equation*}
which are associated with the energy functional
$J_\delta: H^1(\R^N)\to \R\cup\{+\infty\}$ defined by
\begin{equation*}
J_\delta(u)=I_\delta(u)+\frac{\omega+1}{2}\int_{\R^N} u^2\,dx-\frac{1}{2}\int_{\R^N} u^2 \log u^2\,dx.
\end{equation*} 
Since there holds $I_\delta(u)\leq C_{N}\int_{\R^N}|\nabla u|^2dx$ 
for all $\delta>0$ and $u\in H^1(\R^N)$ 
(cf.\ \cite[Theorem 2]{nguyen06}) the energy functional $J_\delta$ is well defined, for every $\delta>0$.
\end{remark}

\begin{remark}[Magnetic case]
	\rm 
If $A:\R^N\to\R^N$ is locally bounded and $u:\R^N\to\C$, we set
$$
\Psi_u(x,y):=e^{\i (x-y)\cdot A\left(\frac{x+y}{2}\right)}u(y),\quad x,y\in\R^N.
$$
It was observed in \cite{piemar} that the following {\em Diamagnetic inequality} holds 
	$$
	||u(x)|-|u(y)||\leq
	\big|\Psi_u(x,x)-\Psi_u(x,y)\big|,\quad\text{for a.e.\ $x,y\in\R^N$.}
	$$	
In turn, by defining
	$$
	I_\delta^A(u):=\int_{\{|\Psi_u(x,y)-\Psi_u(x,x)|>\delta\}}\frac{\delta^2}{|x-y|^{N+2}}dxdy,
	$$
we have
\begin{equation}
\label{dia}
	I_\delta(|u|)\leq 	I_\delta^A(u),\quad\text{for all $\delta>0$ and all measurable 
	$u:\R^N\to\C$.}
\end{equation}
Then, Theorem~\ref{main1} yields 
the following {\em Magnetic logarithmic Sobolev inequality}.
For $u\in L^2(\R^N)$, there is a positive constant $C_N$ such that 
%{\color{blue}{
$$
\int_{\R^N} \frac{|u|^2}{\|u\|_{2}^2}\log \frac{|u|^2}{\|u\|_{2}^2} \,dx
+ \frac{N}{2}\log \|u\|_2^2\leq \frac{N}{2} \log \left( C_N \delta^\frac{4}{N} \|u\|_2^{\frac{2N - 4}{N}} + C_N I_\delta^A(u) \right).
$$		
Notice that, since $I_\delta(|u|)\approx \|\nabla |u|\|_2^2$ as $\delta\searrow 0$ \cite{nguyen06} and 
$I_\delta^A(u)\approx \|\nabla u-\i Au\|_2^2$ as $\delta\searrow 0$ \cite{mag},
from inequality \eqref{dia} one recovers $\|\nabla |u|\|_2\leq\|\nabla u-\i Au\|_2$
which follows from the well-know diamagnetic inequality for the gradients
$|\nabla |u||\leq |\nabla u-\i Au|$, see \cite{LL}.
\end{remark}
\smallskip 

\noindent
As a companion to Theorem~\ref{main1}, we also have the following 
\noindent
\begin{theorem}
	\label{main2}
	Let $u\in L^2(\R^N)$ $(N \ge 3)$. Assume that there exists a non-decreasing 
	function $F:\R^+\to\R^+$ such that $F(ts)\leq t^\beta F(s)$ for any $s,t\geq 0$ and some $\beta>0$ and 
	\begin{equation}
	\label{integF}
	\int_{\R^{2N}}\frac{F(|u(x)-u(y)|)}{|x-y|^{N+2}}dxdy<+\infty.
	\end{equation}
	Then there exists a positive constant $C_{N,F}$ such that
	$$
	\int_{\R^N} \frac{u^2}{\|u\|_{2}^2}\log \frac{u^2}{\|u\|_{2}^2} \,dx
	+ \frac{N}{2}\log \|u\|_2^\beta\leq \frac{N}{2} \log \left(C_{N,F} \|u\|_2^\beta 
	+C_{N,F}\int_{\R^{2N}}\frac{F(|u(x)-u(y)|)}{|x-y|^{N+2}}dxdy\right), 
	$$		
	In particular, condition \eqref{log-sum} holds.
\end{theorem}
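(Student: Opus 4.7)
The plan is to reduce Theorem~\ref{main2} to Theorem~\ref{main1} by controlling $I_\delta(u)$ at a fixed scale (namely $\delta=1$) in terms of the $F$-integral in \eqref{integF}, and by using the homogeneity $F(ts)\le t^\beta F(s)$ to keep track of $\|u\|_2$ on both sides of the inequality.

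First I would apply the hypothesis on $F$ with $(t,s)=(\|u\|_2,|v(x)-v(y)|)$ and then with $(t,s)=(1/\|u\|_2,|u(x)-u(y)|)$, where $v:=u/\|u\|_2$; the two complementary bounds combine into the exact scaling identity
$$\int_{\R^{2N}}\frac{F(|u(x)-u(y)|)}{|x-y|^{N+2}}\,dx\,dy=\|u\|_2^{\beta}\int_{\R^{2N}}\frac{F(|v(x)-v(y)|)}{|x-y|^{N+2}}\,dx\,dy,$$
so that, as in the proof of Theorem~\ref{main1}, matters reduce to the normalised case $\|v\|_2=1$. Next, since $F$ is non-decreasing, for any $\delta>0$ one has the pointwise inequality $F(|v(x)-v(y)|)\ge F(\delta)\mathbf{1}_{\{|v(x)-v(y)|>\delta\}}$; dividing by $|x-y|^{N+2}$, integrating, and specialising to $\delta=1$ produces
$$I_1(v)\le\frac{1}{F(1)}\int_{\R^{2N}}\frac{F(|v(x)-v(y)|)}{|x-y|^{N+2}}\,dx\,dy.$$
Finally, Theorem~\ref{main1} applied with $\delta=1$ and $\|v\|_2=1$ reads $\int_{\R^N}v^2\log v^2\,dx\le\tfrac{N}{2}\log(C_N+C_N I_1(v))$; chaining with the previous estimate and choosing $C_{N,F}:=\max\{C_N,\,C_N/F(1)\}$ delivers the desired inequality for $v$, and reinstating the scaling factor $\|u\|_2^\beta$ on both sides recovers the claimed statement for $u$. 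The finiteness assertion \eqref{log-sum} is then an immediate consequence.

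The main obstacle, such as it is, lies in the scaling step: one must verify that the two-sided hypothesis $F(ts)\le t^\beta F(s)$ for all $t,s\ge 0$ really forces the nonlocal $F$-functional to behave exactly as $\|u\|_2^\beta$ times its value on $v=u/\|u\|_2$, and not merely as a one-sided bound, since otherwise the $\|u\|_2^\beta$ appearing inside the logarithm on the right-hand side could not be cleanly pulled out. Once this is settled, the monotonicity of $F$ together with the choice $\delta=1$ in Theorem~\ref{main1} closes the argument without any further optimisation in $\delta$.
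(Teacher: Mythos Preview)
Your argument is correct, and it takes a genuinely different route from the paper's own proof. The paper does not reduce Theorem~\ref{main2} to Theorem~\ref{main1}; instead it repeats the Jensen step \eqref{chiav} and then invokes a separate external result, namely \cite[Proposition~6]{nguyen07}, which gives a Sobolev-type bound
\[
\int_{\{|u|>\lambda_N F(1/2)\}} |u|^{2N/(N-2)}\,dx
\le C_N\Big(\tfrac{1}{F(1/2)}\!\int_{\R^{2N}}\tfrac{F(|u(x)-u(y)|)}{|x-y|^{N+2}}\,dx\,dy\Big)^{N/(N-2)},
\]
and proceeds exactly as in Theorem~\ref{main1} with this in place of \eqref{inequality}. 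Your approach is more self-contained within the paper: you only use Theorem~\ref{main1} together with the elementary monotonicity bound $I_1(v)\le F(1)^{-1}\int F(|v(x)-v(y)|)\,|x-y|^{-N-2}\,dx\,dy$, thereby avoiding a second citation to \cite{nguyen07}. Both proofs implicitly need $F$ not identically zero (the paper divides by $F(1/2)$, you by $F(1)$).

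Two minor remarks. First, your observation that the hypothesis $F(ts)\le t^\beta F(s)$ for \emph{all} $t,s\ge 0$ forces exact homogeneity $F(ts)=t^\beta F(s)$ (hence $F(s)=F(1)s^\beta$) is correct, but it is stronger than what either proof requires: in the unwinding step you only need the one-sided bound $F(|v(x)-v(y)|)\le \|u\|_2^{-\beta}F(|u(x)-u(y)|)$, which is precisely the stated sub-homogeneity with $t=1/\|u\|_2$. So your worry that ``otherwise the $\|u\|_2^\beta$ \dots could not be cleanly pulled out'' is unfounded---the one-sided inequality already suffices, and this is in fact exactly how the paper handles the general case. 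Second, there is nothing special about $\delta=1$; any fixed $\delta>0$ with $F(\delta)>0$ works and merely changes the constant $C_{N,F}$.
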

\begin{proof}
	Consider  the statement when $\|u\|_2=1$.
	In light of inequality \eqref{chiav}, since by \cite[Proposition 6]{nguyen07} there exists
	$C_N>0$ and $\lambda_N>0$ such that
\begin{equation}
\label{inequality2}
\int_{\{|u|>\lambda_N F(1/2)\}} |u|^{2N/(N-2)}dx\leq C_N \left(\frac{1}{F(1/2)} \int_{\R^{2N}}\frac{F(|u(x)-u(y)|)}{|x-y|^{N+2}}dxdy \right)^{N/(N-2)}, 
\end{equation}	
by arguing as in the previous proof, we get
\begin{equation*}
\frac{2}{N-2}	\int_{\R^N} u^2 \log u^2 \le \log \left(D_{N,F} + D_{N,F}\left( \int_{\R^{2N}}\frac{F(|u(x)-u(y)|)}{|x-y|^{N+2}}dxdy \right)^{N/(N-2)}\right),
\end{equation*}
where we used the fact that 
$$
\int_{\{|u| \le \lambda_N F(1/2)\}} |u|^{\frac{2N}{N-2}}dx \le \lambda_N^{\frac{4}{N-2}} F(1/2)^{\frac{4}{N-2}},
$$
since $\int_{\R^N} u^2 \, dx = 1$. Then, we get
\begin{equation*}
\int_{\R^N} u^2 \log u^2 \le \frac{N}{2}\log \Big(C_{N,F} + C_{N,F} \int_{\R^{2N}}\frac{F(|u(x)-u(y)|)}{|x-y|^{N+2}}dxdy\Big).
\end{equation*}
In the general case, using the sub-homogeneity condition on $F$ yields
\begin{equation*}
\int_{\R^N} \frac{u^2}{\|u\|_2^2} \log \frac{u^2}{\|u\|_2^2} \le \frac{N}{2}\log \Big(C_{N,F} + \frac{C_{N,F}}{\|u\|_2^\beta} \int_{\R^{2N}}\frac{F(|u(x)-u(y)|)}{|x-y|^{N+2}}dxdy\Big),
\end{equation*}
which yields the desired conclusion.
\end{proof}

\begin{remark}[$L^p(\R^N)$-version]
	\rm
If $p>1$ and $N>p$, one has a variant of \eqref{chiav}, namely
	\begin{equation}
\label{chiav2}
\log\left(\int_{\R^N}|u|^{\frac{Np}{N-p}}dx\right)
\geq \frac{p}{N-p} \int_{\R^N}|u|^p\log |u|^p dx.   
\end{equation}
Then, by arguing as in the proofs of Theorems~\ref{main1} and \ref{main2} with
\begin{equation}
\label{p-vers}
u\mapsto \int\int_{\{|u(y)-u(x)|>\delta\}}\frac{\delta^p}{|x-y|^{N+p}}dxdy, \quad\,\,
u\mapsto \int_{\R^{2N}}\frac{F(|u(x)-u(y)|)}{|x-y|^{N+p}}dxdy,
\end{equation}
in place of $I_\delta(u)$ and \eqref{integF} respectively, it is possible to get corresponding 
log-Sobolev inequalities as for the case $p=2$, via the results of \cite{nguyen07}. 
In particular, if $u\in L^p(\R^N)$ and 
the functionals  in \eqref{p-vers} are finite at $u$ for some $\delta>0$, then 
\begin{equation*}
\int_{\R^N} |u|^p\log |u|^p dx<+\infty.
\end{equation*}
The Euclidean logarithmic Sobolev inequalities for the $p$-case have been intensively studied,
see e.g. the work of Del Pino and Dolbeault \cite{delpino} and the references therein.
\end{remark}

\medskip

\end{document}